\newtheorem{Theorem}{Theorem}
\newtheorem{Corollary}[Theorem]{Corollary} 
\newtheorem{Lemma}[Theorem]{Lemma}
\newcommand{\rr}{\mathbb{R}}
\begin{document}

\title{Growth series of CAT(0) cubical complexes}

\author{Boris Okun} \address{University of Wisconsin--Milwaukee}
\email{okun@uwm.edu}
\author{Richard Scott}\address{Santa Clara University}
\email{rscott@scu.edu} \thanks{Both authors partially supported by a Simon's Foundation Collaboration Grant for Mathematicians.}

\date{\today}
\begin{abstract}
	Let $X$ be a CAT(0) cubical complex.
	The growth series of $X$ at $x$ is $G_{x}(t)=\sum_{y \in Vert(X)} t^{d(x,y)}$, where $d(x,y)$ denotes $\ell_{1}$-distance between $x$ and $y$.
	If $X$ is cocompact, then $G_{x}$ is a rational function of $t$.
	In the case when $X$ is the Davis complex of a right-angled Coxeter group it is a well-known that $G_{x}(t)=1/f_{L}(-t/(1+t))$, where $f_{L}$ denotes the $f$-polynomial of the link $L$ of a vertex of $X$.
	We obtain a similar formula for general cocompact $X$.
	We also obtain a simple relation between the growth series of individual orbits and the $f$-polynomials of various links.
	In particular, we get a simple proof of reciprocity of these series ($G_{x}(t)=\pm G_{x}(t^{-1})$) for an Eulerian manifold $X$.
\end{abstract}
\maketitle

Let $X$ be a CAT(0) cube complex with a cocompact cellular action by a group $G$.
Denote by $d(x,y)$ the $\ell_{1}$-distance between vertices $x$ and $y$ of $X$.
We consider the following growth series:
\[
	G_{xy}=\sum_{z \in Gy} t^{d(x,z)}
\]
--- the growth series of $G$-orbit of $y$ as seen from $x$, and
\[
	G_{x}=\sum_{y \in X} t^{d(x,y)}
\]
--- the full growth series of $X$ as seen from $x$.

The aim of this paper is to establish relations between these growth series and the local structure of $X$ and $X/G$.
In order to do this we introduce more notation.
The $f$-polynomial of a simplicial complex $L$ is given by:
\[
	f_{L}(t)=\sum_{\sigma \in L} t^{\dim \sigma +1}.
\]
Note that we assume that $L$ contains an empty simplex of dimension $-1$, so the $f$-polynomial always has free term $1$.
For vertices $x$ and $y$ of $X$, denote by $\Box_{xy}$ the cube spanned by $x$ and $y$.
In other words $\Box_{xy}$ is the minimal cube containing $x$ and $y$.
Let $f_{xy}$ denote the $f$-polynomial of the link of the cube $\Box_{xy}$, and let $f_{x}=f_{xx}$ denote the $f$-polynomial of the link of the vertex $x$.
We put $f_{xy}=0$ if $x$ and $y$ are not contained in a cube.

A fundamental example of a cocompact CAT(0) cube complex is the Davis complex of a right-angled Coxeter group.
In this case the group acts simply transitively on vertices and thus all the growth series are equal, and we have the following well-known result.
(For general Coxeter groups, this can be found in \cite{s68}, Theorem 1.25 and Corollary 1.29. For the right angled case, it takes the following form.)
\begin{Theorem}
	If $G$ is a right-angled Coxeter group and $X$ is its Davis complex, then
	\[
		G_{x}(t) f_{x}\left(\frac{-t}{1+t}\right) =1.
	\]
\end{Theorem}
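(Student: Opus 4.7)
The plan is to deduce the identity from Solomon's version of Steinberg's formula (cited as \cite{s68}), combined with the very explicit structure of the Davis complex in the right-angled case.

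The key translations are as follows. First, since $G=W$ acts simply transitively on $\mathrm{Vert}(X)$, I would identify vertices with group elements by sending $x\mapsto e$; a standard hyperplane-crossing argument then gives $d(x,wx)=\ell(w)$, where $\ell$ is the Coxeter word length, so
\[
G_x(t)=\sum_{w\in W}t^{\ell(w)}=:W(t).
\]
Second, the link of $x$ in the Davis complex is the \emph{nerve} of $(W,S)$: its simplices correspond bijectively to spherical subsets $T\subseteq S$ (those generating a finite parabolic subgroup $W_T$), with $\dim\sigma+1=|T|$. Consequently, $f_x(s)=\sum_{T\in L}s^{|T|}$.

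With these identifications in place, I would invoke Solomon's identity
\[
\frac{1}{W(t)}=\sum_{T\text{ spherical}}\frac{(-1)^{|T|}}{W_T(t^{-1})},
\]
and use the right-angled hypothesis: a subset $T\subseteq S$ is spherical if and only if its generators pairwise commute, in which case $W_T\cong(\mathbb{Z}/2)^{|T|}$ has growth series $W_T(s)=(1+s)^{|T|}$. Substituting, the right-hand side collapses to
\[
\sum_{T\in L}\left(\frac{-t}{1+t}\right)^{\!|T|}=f_x\!\left(\frac{-t}{1+t}\right),
\]
which gives the claimed identity.

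There is no substantive obstacle: once the identifications above are made, the proof is a formal manipulation of Steinberg's formula, and every geometric input (simply transitive action, identification of the link with the nerve, description of spherical subsets as cliques) is standard for Davis complexes of right-angled Coxeter groups. The only delicate point is matching conventions so that the substitution produces the argument $-t/(1+t)$ rather than $-1/(1+t)$; writing Solomon's identity with $W_T(t^{-1})$ (rather than $W_T(t)$) on the right, as above, is precisely what makes this work out.
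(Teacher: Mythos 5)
Your proposal is correct and is essentially the paper's own treatment: the paper does not prove Theorem~1 from scratch but cites Steinberg's formula (\cite{s68}, Theorem~1.25 and Corollary~1.29) and notes that it ``takes the following form'' in the right-angled case, which is exactly the specialization you carry out (spherical subsets are cliques, $W_T(s)=(1+s)^{|T|}$, simply transitive action with $d(x,wx)=\ell(w)$, link equals nerve). Your write-up simply makes explicit the routine substitution that the paper leaves to the reader, including the correct handling of the $t\mapsto t^{-1}$ convention.
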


In fact, it was proved by the second author in \cite{s07} that the same formula holds if one assumes only that the $f$-polynomials of all vertices are the same:
\begin{Theorem}
	If the links of all vertices of $X$ have the same $f$-polynomial, then
	\[
		G_{x}(t) f_{x}\left(\frac{-t}{1+t}\right) =1.
	\]
\end{Theorem}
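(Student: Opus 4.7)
The plan is to prove the stronger identity
\[
\sum_{v \in X} t^{d(x,v)}\, f_{v}\!\left(\frac{-t}{1+t}\right) = 1,
\]
which holds in any (cocompact, locally finite) CAT(0) cube complex with \emph{no} equal-links hypothesis. Under the assumption of the theorem, all $f_{v}$ equal $f_{x}$, so $f_{x}(-t/(1+t))$ factors out of the sum and one immediately reads off $G_{x}(t)\,f_{x}(-t/(1+t)) = 1$.

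To prove the identity, I would first use the bijection between simplices of $L_{v}$ (including the empty simplex) and cubes of $X$ containing $v$ to rewrite $f_{v}(s) = \sum_{C \ni v} s^{\dim C}$, where the $0$-cube $\{v\}$ corresponds to the empty simplex. Substituting $s = -t/(1+t)$ and swapping the order of summation turns the left-hand side into
\[
\sum_{C}\left(\frac{-t}{1+t}\right)^{\dim C}\sum_{v \in C} t^{d(x,v)}.
\]
Next I would exploit the gate structure of each cube: there is a unique vertex $v_{0}(C)$ of $C$ nearest to $x$, and the vertex $v_{S}$ indexed by a subset $S$ of the hyperplanes crossing $C$ satisfies $d(x,v_{S}) = d(x,v_{0}(C)) + |S|$. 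Hence the inner sum equals $t^{d(x,v_{0}(C))}(1+t)^{\dim C}$, and after cancellation the whole expression collapses to $\sum_{C}(-1)^{\dim C}\,t^{d(x,v^{*}(C))}$, where $v^{*}(C)$ is the vertex of $C$ opposite the gate.

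The final step is to reindex by $y = v^{*}(C)$. The cubes $C$ with top vertex $y$ are exactly the subcubes at $y$ of the ``descending cube'' $C_{y}$ --- the cube at $y$ spanned by the edges that decrease distance to $x$. For $y = x$ this cube is $\{x\}$ and contributes $1$; for $y \ne x$, $C_{y}$ has some positive dimension $k_{y}$, and the alternating sum over its subcubes containing $y$ is
\[
\sum_{S \subseteq \{1,\dots,k_{y}\}}(-1)^{|S|} = (1-1)^{k_{y}} = 0,
\]
killing the contribution. Only the $y = x$ term survives, giving $1$.

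The one substantive geometric input, and the main point to verify or cite, is that $C_{y}$ really is a cube of $X$: the edges at $y$ decreasing distance to $x$ must pairwise span $2$-cubes and jointly span a cube of dimension equal to their number. This is the standard CAT(0) cube complex fact that a pairwise-crossing family of hyperplanes spans a cube, and is where the CAT(0) hypothesis enters. Everything else is formal generating-function bookkeeping.
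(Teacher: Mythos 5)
Your argument is correct, and it takes a genuinely different route from the paper. The paper deduces this theorem from its matrix-inversion machinery: the key lemma that $1_{x}$ is a unique $\rr(t)$-combination of the functions $h_{y}$ on the star of $x$, the inclusion--exclusion lemma giving the explicit coefficients $c_{xy}=\left(\frac{-t}{1-t^{2}}\right)^{d(x,y)}f_{xy}\left(\frac{t^{2}}{1-t^{2}}\right)$, the resulting identity $\sum_{y}\bar{c}_{xy}G_{yz}=\delta_{xz}$, and Lemma~\ref{c:sum1} ($\sum_{y}c_{xy}=f_{x}(-t/(1+t))$), which upon summing gives $\sum_{y\in X/G}f_{y}(-t/(1+t))\,G_{xy}=1$ and hence the theorem when all links have the same $f$-polynomial (the paper also cites \cite{s07} for an earlier proof). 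What you do instead is prove that last identity directly in its unfolded form $\sum_{v\in X}t^{d(x,v)}f_{v}(-t/(1+t))=1$: expand $f_{v}$ over cubes containing $v$, swap sums (legitimate as formal power series since $X$ is locally finite and the valuation of each term is at least $d(x,v)$), use the gate of each cube to evaluate $\sum_{v\in C}t^{d(x,v)}=t^{d(x,v_{0}(C))}(1+t)^{\dim C}$, and then reindex by the vertex of $C$ farthest from $x$, where the binomial cancellation over the faces of the descending cube $C_{y}$ kills every $y\neq x$. The one geometric input you flag --- that the distance-decreasing edges at $y$ span a cube --- is exactly the standard fact (pairwise-crossing hyperplanes adjacent to a common vertex, plus flagness of links) underlying the Niblo--Reeves normal cube paths, so it is safely citable; note also that your reindexing uses that a cube is determined by its set of edges at $y$, i.e.\ simpliciality of the link, which again holds for CAT(0) cube complexes. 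The trade-off: your computation is shorter and more self-contained for this statement, and it actually proves the stronger first formula of the paper's final Corollary with no equal-links hypothesis; the paper's heavier machinery is what yields the full matrix-inverse statements (Theorems~\ref{t:simple} and \ref{t:general}) and the reciprocity corollary, which your telescoping argument does not recover.
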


Our goal is to generalize this to the case of different links.
Since, by a result of Niblo and Reeves \cite{nr98}, CAT(0) cube groups have an automatic structure, it follows that the growth series $G_{xy}$ are rational functions of $t$ computable in terms of the local structure of $X$.
This computation was carried out by the second author in \cite{s14b}, where it was used to prove reciprocity of the growth series for Eulerian manifolds.
In this paper we obtain different and much simpler formulas for the growth series which lead to an easy proof of reciprocity.

Our result is easiest to state when the action is sufficiently free.
Define:
\begin{equation}\label{e:cxy}
	c_{xy}=\left(\frac{-t}{1-t^{2}}\right)^{d(x,y)} f_{xy}\left(\frac{t^{2}}{1-t^{2}}\right).
\end{equation}
\begin{Theorem}\label{t:simple}
	If the stars of vertices are embedded in $X/G$, then the matrices $(G_{xy})$ and $(c_{xy})$ ($x,y \in X/G$) are inverses of each other.
\end{Theorem}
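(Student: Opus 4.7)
My plan is to first prove a stronger pointwise identity on $X$ itself --- with no group action --- and then deduce Theorem \ref{t:simple} by summing over $G$-orbits. Let $M$ and $N$ be the matrices indexed by vertices of $X$ with entries $M_{xy} = t^{d(x,y)}$ and $N_{xy} = c_{xy}$. The first step will be to establish $MN = I$ pointwise:
\[
	\sum_{z} t^{d(x,z)} c_{zy} = \delta_{xy},
\]
a finite sum since $c_{zy} = 0$ unless $z$ lies in the star of $y$.

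The key geometric ingredient is the hyperplane description of the $\ell_{1}$-metric: writing $H(p,q)$ for the set of hyperplanes separating $p$ and $q$, one has $d(p,q) = |H(p,q)|$ and $H(x,z) = H(x,y) \mathbin{\triangle} H(y,z)$, giving $d(x,z) = d(x,y) + d(y,z) - 2|H(x,y) \cap H(y,z)|$. I would then invoke the standard CAT(0) fact that the set $\sigma^{*} \subseteq \operatorname{link}(y)$ of vertices dual to hyperplanes at $y$ which separate $y$ from $x$ (the ``gate'' at $y$ toward $x$) is itself a simplex, with $\sigma^{*} = \emptyset$ iff $x = y$. If $\sigma \in \operatorname{link}(y)$ corresponds to the cube $\Box_{yz}$, then $|H(x,y) \cap H(y,z)| = |\sigma^{*} \cap \sigma|$. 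Substituting and setting $s = t^{2}/(1-t^{2})$, so that $t \cdot (-t/(1-t^{2})) = -s$, the sum becomes
\[
	t^{d(x,y)} \sum_{\sigma \in \operatorname{link}(y)} (-s)^{|\sigma|}\, t^{-2|\sigma^{*} \cap \sigma|}\, f_{\operatorname{link}(\sigma)}(s),
\]
where $|\sigma| = \dim\sigma + 1$. Expanding $f_{\operatorname{link}(\sigma)}(s) = \sum_{\tau \supseteq \sigma} s^{|\tau|-|\sigma|}$ and swapping the order of summation, the inner sum over $\sigma \subseteq \tau$ splits vertex-by-vertex into $(1-t^{-2})^{|\tau \cap \sigma^{*}|}(1-1)^{|\tau \setminus \sigma^{*}|}$, which forces $\tau \subseteq \sigma^{*}$. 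Using the identity $s(1-t^{-2}) = -1$, what remains is $t^{d(x,y)} \sum_{\tau \subseteq \sigma^{*}}(-1)^{|\tau|}$, which equals $1$ when $\sigma^{*} = \emptyset$ and $0$ otherwise, yielding $\delta_{xy}$.

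Passing to the quotient should then be straightforward bookkeeping. The embedded-stars hypothesis means that for any vertex $x \in X$ and any orbit $[y] \in X/G$, at most one element of $Gy$ lies in the star of $x$, so one can define $c_{[x][y]} := \sum_{y' \in Gy} N_{xy'}$ as a single nonzero term or zero, matching the orbit-wise value of $c_{xy}$. By definition $G_{[x][y]} = \sum_{y' \in Gy} M_{xy'}$, and combining these with $MN = I$ and the $G$-equivariance of $M, N$ gives
\[
	\sum_{[z] \in X/G} G_{[x][z]}\, c_{[z][y]} = \sum_{z \in X} M_{xz} \sum_{y' \in Gy} N_{zy'} = \sum_{y' \in Gy} (MN)_{xy'} = \delta_{[x][y]}.
\]
The main obstacle I anticipate is the CAT(0) input at the heart of step~2 --- verifying that $\sigma^{*}$ is a simplex and that hyperplane intersections convert cleanly into simplex intersections. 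Once that geometric foundation is in place, the rest is a routine inclusion--exclusion collapse.
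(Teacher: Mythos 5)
Your proposal is correct, but it reaches the key identity by a genuinely different route than the paper. Both arguments reduce Theorem~\ref{t:simple} to the pointwise statement on $X$ that the symmetric matrices $\bigl(t^{d(x,y)}\bigr)$ and $(c_{xy})$, indexed by vertices of $X$, are mutually inverse (your $MN=I$ is just the transpose of the paper's identity $\sum_y c_{xy}h_y(z)=\delta_{xz}$), and your passage to $X/G$ by orbit summation, using equivariance and the embedded-stars hypothesis, is the same bookkeeping as the paper's; since $X/G$ is finite, the one-sided identity suffices, and the interchanges of orbit sums converge coefficientwise by local finiteness, the same level of rigor the paper adopts. Where you differ is the proof of the pointwise identity. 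The paper first shows (Lemma~\ref{l:key}) that $1_x$ is a unique $\rr(t)$-combination of the functions $h_y$, $y\in St(x)$, via the cone decomposition induced by the hyperplanes at $x$, and then identifies the coefficients with the closed formula (\ref{e:cxy}) by building the star cube by cube, using the product formula and the inclusion--exclusion Lemma~\ref{l:ie}. You instead verify $\sum_z t^{d(x,z)}c_{zy}=\delta_{xy}$ directly: the relation $d(x,z)=d(x,y)+d(y,z)-2|H(x,y)\cap H(y,z)|$ is correct; the ``gate'' fact that the hyperplanes at $y$ separating $y$ from $x$ span a simplex $\sigma^{*}$ of $\operatorname{link}(y)$ is indeed standard (it follows from the absence of inter-osculation together with flagness of links, and is the same input as the Niblo--Reeves normal cube path lemma); and the binomial collapse using $s(1-t^{-2})=-1$ correctly yields $\delta_{xy}$, since $\sigma^{*}=\emptyset$ exactly when $x=y$. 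What each approach buys: yours is a short, one-shot computation, at the cost of importing the gate-simplex fact from cube-complex geometry; the paper's is more self-contained (only convexity of the cone decomposition and a determinant evaluation at $t=0$), and its intermediate steps --- uniqueness of the coefficients, the product formula, and inclusion--exclusion --- are reused elsewhere in the paper (Lemmas~\ref{c:sum1} and~\ref{l:expl}), which your single computation does not provide.
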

Note that in this case the matrices are symmetric.

In the general case, two vertices in $X/G$ can span multiple cubes.
To account for this we modify the coefficients $c_{xy}$ as follows.
Let $\pi: X \to X/G$ denote the natural projection.
For $x,y \in X/G$, pick $\bar{x} \in \pi^{-1}(x)$ and set
\[
	\bar{c}_{xy}= \sum_{\bar{y} \in \pi^{-1}(y)} c_{\bar{x}\bar{y}}.
\]
\begin{Theorem}\label{t:general}
	The matrices $(G_{xy})$ and $(\bar{c}_{xy})$ are inverses of each other:
	\[
		\sum_{y \in X/G} \bar{c}_{xy} G_{yz} = \delta_{xz}.
	\]
\end{Theorem}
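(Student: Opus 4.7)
The plan is to reduce the matrix identity on $X/G$ to a single local identity upstairs in $X$, and then prove that local identity by a cube-by-cube computation combined with an Euler characteristic argument on a subcomplex of the link of $\bar x$.

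\emph{Step 1: reduce to a local identity on $X$.} Fix a lift $\bar x$ of $x$. For any $\bar y \in \pi^{-1}(y)$ we have $G_{yz} = \sum_{\bar z \in \pi^{-1}(z)} t^{d(\bar y, \bar z)}$, so unfolding the definition of $\bar c_{xy}$ and switching the order of summation gives
\[
\sum_{y \in X/G} \bar c_{xy}\, G_{yz} \;=\; \sum_{\bar z \in \pi^{-1}(z)}\,\sum_{\bar y \in X} c_{\bar x \bar y}\, t^{d(\bar y, \bar z)}.
\]
(The inner sum is finite because $c_{\bar x \bar y} = 0$ unless $\bar x$ and $\bar y$ span a cube.) It therefore suffices to prove the local identity
\[
\sum_{\bar y \in X} c_{\bar x \bar y}\, t^{d(\bar y, \bar z)} \;=\; \delta_{\bar x \bar z} \qquad \text{for all } \bar x, \bar z \in \operatorname{Vert}(X);
\]
summing over $\bar z \in \pi^{-1}(z)$ then collapses the right-hand side to $\delta_{xz}$.

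\emph{Step 2: cube-by-cube computation.} To prove the local identity I would substitute \eqref{e:cxy}, expand $f_{\bar x \bar y}$ as a sum over cubes $D \supseteq \Box_{\bar x \bar y}$, and swap the outer index to $D \ni \bar x$, with $\bar y$ running over the vertices of $D$. With $u = t^2/(1-t^2)$, the contribution of $D$ becomes $u^{\dim D}\sum_{\bar y \in D} (-1/t)^{d(\bar x, \bar y)}\, t^{d(\bar y, \bar z)}$. The geometric input is the gate property of cubes: inside $D$, there is a unique vertex $\bar y_0$ minimizing the distance to $\bar z$, and $d(\bar y, \bar z) = d(\bar y, \bar y_0) + d(\bar y_0, \bar z)$ for every $\bar y \in D$. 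Placing $\bar x$ at the origin of $\{0,1\}^{\dim D}$ and $\bar y_0$ at $(a_1, \ldots, a_{\dim D})$, the inner sum factors as a product over coordinate directions; each factor is $0$ when $a_i = 0$ and $(t^2 - 1)/t$ when $a_i = 1$. Hence the only $D$ that contribute are those whose antipode of $\bar x$ \emph{is} the gate $\bar y_0$, equivalently, cubes all of whose hyperplanes separate $\bar x$ from $\bar z$. After collecting constants the surviving sum reduces to $t^{d(\bar x, \bar z)} \sum_D (-1)^{\dim D}$.

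\emph{Step 3: Euler characteristic interpretation.} Let $A \subseteq \operatorname{Vert}(\operatorname{Lk}(\bar x))$ be the set of edges at $\bar x$ whose dual hyperplanes separate $\bar x$ from $\bar z$, and let $L$ be the full subcomplex of $\operatorname{Lk}(\bar x)$ spanned by $A$. The surviving cubes $D$ (including $D = \{\bar x\}$) are in bijection with the simplices of $L$ (including the empty simplex, corresponding to $D = \{\bar x\}$), with $\dim D = |\sigma|$. Therefore $\sum_D (-1)^{\dim D} = 1 - \chi(L)$. If $\bar x = \bar z$ then $A = \emptyset$, only $D = \{\bar x\}$ contributes, and the identity reads $1 = 1$. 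If $\bar x \ne \bar z$ we need $\chi(L) = 1$, and the plan is to prove the stronger statement that $L$ is a single (closed) simplex, hence contractible. By Gromov's flag condition this reduces to checking that any two edges $e_i, e_j \in A$ span a square at $\bar x$, that is, their dual hyperplanes $H_i, H_j$ cross.

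\emph{Step 4: the crossing lemma (the main obstacle).} I expect this last crossing lemma to be the hardest part. The argument I anticipate is a standard half-space analysis in CAT(0) cube complexes: if $H_i$ and $H_j$ were disjoint, then the neighbor $v_j$ of $\bar x$ across $e_j$ would lie on $\bar x$'s side of $H_i$ (since the edge $e_j$ does not cross $H_i$), so the entire connected component of $X \setminus H_j$ containing $v_j$ -- i.e., the far side of $H_j$ from $\bar x$ -- would be contained in $\bar x$'s side of $H_i$. Consequently no vertex could lie simultaneously on the far sides of both $H_i$ and $H_j$, contradicting $e_i, e_j \in A$. Granting the lemma, $L$ is a simplex whenever $\bar x \ne \bar z$, the local identity holds, and Step 1 then yields the theorem.
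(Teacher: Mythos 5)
Your Steps 1--3 are correct: the unfolding to the local identity $\sum_{\bar y} c_{\bar x\bar y}t^{d(\bar y,\bar z)}=\delta_{\bar x\bar z}$, the expansion of $f_{\bar x\bar y}$ over cubes $D\ni\bar x$, the use of gatedness of cubes, and the identification of the surviving alternating sum with $1-\chi(L)$ all check out. The genuine gap is at the hinge between Steps 3 and 4, in the phrase ``span a square at $\bar x$, that is, their dual hyperplanes $H_i,H_j$ cross.'' These are not synonymous, and Gromov's flag condition does not bridge them: flagness only promotes \emph{pairwise squares at $\bar x$} to higher-dimensional cubes, and says nothing about whether two edges at $\bar x$ whose hyperplanes cross \emph{somewhere in $X$} must span a square \emph{at $\bar x$}. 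That implication is exactly the statement that hyperplanes in a CAT(0) cube complex do not inter-osculate (if two crossing hyperplanes are dual to edges at a common vertex, they already cross in the star of that vertex). It is true, but it is a real lemma --- provable by a disc-diagram argument, or by induction on the distance from $\bar x$ to $H_i\cap H_j$, or citable from Sageev or Haglund--Wise --- and your Step 4 does not supply it: what you prove there (correctly) is only that $H_i$ and $H_j$ cross. A median-based variant runs into the same wall: the median of $v_i$, $v_j$, $\bar z$ is a common neighbour of $v_i,v_j$ distinct from $\bar x$, giving a $4$-cycle at $\bar x$, but you then need the (equally nontrivial, equivalent) fact that $4$-cycles in the $1$-skeleton bound squares. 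So a lemma must be quoted or proved here. Two small additional remarks: you need $A\neq\emptyset$ when $\bar x\neq\bar z$ (take the first edge of a geodesic from $\bar x$ to $\bar z$), since otherwise $\chi(L)=0$ and the sum would be $t^{d(\bar x,\bar z)}$ rather than $0$; and the finiteness of the inner sums is local finiteness of $X$, which is implicit in the paper's setting.

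With that lemma supplied, your proof is correct and genuinely different from the paper's. The paper establishes the same local identity $\sum_y c_{xy}h_y=1_x$ not by direct evaluation but in two stages: Lemma~\ref{l:key} (a cone/gate decomposition of $X$ by the hyperplanes at $x$) shows that coefficients expressing $1_x$ in the $h_y$, $y\in St(x)$, exist and are unique, and then these coefficients are computed by building the star cube by cube using the product formula and the inclusion--exclusion Lemma~\ref{l:ie}, with Lemma~\ref{l:expl} identifying the result with (\ref{e:cxy}). That route needs only gatedness and never touches the crossing-implies-square issue. Your route skips the uniqueness statement and the inclusion--exclusion induction entirely and instead exhibits the cancellation as the contractibility ($\chi=1$) of the subcomplex of the link spanned by the hyperplanes separating $\bar x$ from $\bar z$ --- a nice geometric explanation of why the inverse matrix is given by $f$-polynomials of links --- at the price of invoking the no-interosculation property of CAT(0) cube complexes.
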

\begin{Corollary}\label{c:recip}
	If $X$ is an $n$-dimensional Eulerian manifold, then $c_{xy}$ and $G_{xy}$ satisfy reciprocity:
	\begin{align*}
		c_{xy}(t^{-1}) &=(-1)^{n} c_{xy}(t), \\
		G_{xy}(t^{-1}) &=(-1)^{n} G_{xy}(t).
	\end{align*}
\end{Corollary}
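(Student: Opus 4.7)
The plan is to deduce Corollary \ref{c:recip} in two stages: first establish the pointwise reciprocity $c_{xy}(t^{-1}) = (-1)^n c_{xy}(t)$ from Dehn--Sommerville, then promote this to reciprocity for $G_{xy}$ by inverting the matrix identity of Theorem \ref{t:general}.

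For the first stage, the essential input is that in an $n$-dimensional Eulerian manifold, the link of the cube $\Box_{xy}$ of dimension $k = d(x,y)$ is an Eulerian simplicial complex of dimension $n - k - 1$. The Dehn--Sommerville relations for such a complex can be packaged as the single $f$-polynomial identity
\[
	f_{xy}(-1 - u) = (-1)^{n-k} f_{xy}(u),
\]
which follows from the palindrome property of the $h$-polynomial via the substitution $u = t/(1-t)$. Now I substitute $t \mapsto t^{-1}$ in the defining formula \eqref{e:cxy}; setting $u = t^2/(1-t^2)$, elementary algebra gives
\[
	\frac{-t^{-1}}{1 - t^{-2}} = -\left(\frac{-t}{1 - t^2}\right) \qquad \text{and} \qquad \frac{t^{-2}}{1 - t^{-2}} = -1 - u.
\]
Combining these with the Dehn--Sommerville identity yields
\[
	c_{xy}(t^{-1}) = (-1)^k \left(\frac{-t}{1 - t^2}\right)^k f_{xy}(-1 - u) = (-1)^k (-1)^{n-k} c_{xy}(t) = (-1)^n c_{xy}(t).
\]

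For the second stage, reciprocity is preserved by the orbit sum: since each $c_{\bar{x}\bar{y}}(t^{-1}) = (-1)^n c_{\bar{x}\bar{y}}(t)$, the same relation holds for $\bar{c}_{xy} = \sum_{\bar{y} \in \pi^{-1}(y)} c_{\bar{x}\bar{y}}$. Consequently the matrix $(\bar{c}_{xy}(t))$ satisfies $(\bar{c}_{xy}(t^{-1})) = (-1)^n (\bar{c}_{xy}(t))$. By Theorem \ref{t:general}, the matrix $(G_{xy}(t))$ is its inverse, and inverting the previous relation gives $(G_{xy}(t^{-1})) = (-1)^n (G_{xy}(t))$ as an identity of matrices, hence $G_{xy}(t^{-1}) = (-1)^n G_{xy}(t)$ entrywise.

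The main obstacle I anticipate is writing the Dehn--Sommerville relations in precisely the form matching the substitution $u = t^2/(1-t^2)$ hidden in the definition of $c_{xy}$; once this change of variables is correctly aligned, the rest of the argument is routine algebra and a single matrix inversion.
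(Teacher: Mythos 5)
Your proof is correct and takes essentially the same approach as the paper: the Dehn--Sommerville relation applied to the link of $\Box_{xy}$ (an Eulerian complex of dimension $n-k-1$, giving the sign $(-1)^{n-k}$) together with the substitution $t\mapsto t^{-1}$ in \eqref{e:cxy} gives the first identity, and the second follows by inverting the matrix identity of Theorem~\ref{t:general}. You simply make explicit the ``bit of algebra'' and the passage through $\bar{c}_{xy}$ that the paper leaves to the reader.
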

\begin{proof}
	For a simplicial Eulerian $(n-1)$-sphere $L$ we have the Dehn--Sommerville relations (see \cite{s97b}, pages 353-354 or \cite{g05}, p.271)
	\[
		f_{L}(t - 1) = (-1)^{n}f_{L}(-t).
	\]
	
	A bit of algebra gives the first formula, and the second formula then follows.
\end{proof}

At this point an attentive reader might wonder how to reconcile our formula with the one for the Davis complex, where our matrices become $1\times1$.
We have the following lemma:
\begin{Lemma}\label{c:sum1}
	\[
		\sum_{y \in X} c_{xy}=f_{x}(\frac{-t}{1+t}).
	\]
	\[
		\sum_{y \in X/G} \bar{c}_{xy}=f_{x}(\frac{-t}{1+t}).
	\]
\end{Lemma}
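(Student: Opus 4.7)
My plan is to rewrite the sum $\sum_{y \in X} c_{xy}$ as a sum indexed by simplices of the link $L = \mathrm{lk}(x)$, and then to collapse it via an exchange of summation followed by the binomial theorem. Since $c_{xy}$ vanishes unless $x$ and $y$ lie in a common cube, the map $y \mapsto \Box_{xy}$ is a bijection from the nonvanishing terms onto the cubes $D \subseteq X$ containing $x$, with $y$ the vertex of $D$ antipodal to $x$. Under the standard correspondence between cubes at $x$ and simplices of $L$ (a $k$-cube corresponds to a $(k-1)$-simplex, with $\{x\}$ corresponding to the empty simplex), we have $d(x,y) = \dim\sigma + 1$ and $f_{xy}$ is the $f$-polynomial of $\mathrm{lk}_{L}(\sigma)$.

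Setting $u = t^{2}/(1-t^{2})$ and $v = -t/(1-t^{2})$, the sum becomes
\[
\sum_{\sigma \in L} v^{\dim\sigma + 1}\, f_{\mathrm{lk}_{L}(\sigma)}(u).
\]
I would then expand the inner $f$-polynomial as $\sum_{\rho \supseteq \sigma} u^{\dim\rho - \dim\sigma}$, swap the order of summation, and observe that for each fixed $\rho$ the resulting inner sum over subsets $\sigma \subseteq \rho$ is exactly the binomial expansion of $(u+v)^{\dim\rho + 1}$. This collapses the double sum to $f_{L}(u+v)$, and a short computation gives $u + v = (t^{2}-t)/(1-t^{2}) = -t/(1+t)$, which yields the first identity.

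The second identity is a bookkeeping step rather than a new computation: substituting the definition $\bar{c}_{xy} = \sum_{\bar{y} \in \pi^{-1}(y)} c_{\bar{x}\bar{y}}$ into $\sum_{y \in X/G} \bar{c}_{xy}$ reassembles the full sum $\sum_{\bar{y} \in X} c_{\bar{x}\bar{y}}$, which by the first identity equals $f_{\bar{x}}(-t/(1+t)) = f_{x}(-t/(1+t))$. I do not anticipate a real obstacle; the only substantive content is the star/link bijection together with the binomial collapse after exchanging the order of summation, and the second identity reduces immediately to the first.
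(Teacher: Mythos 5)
Your proof is correct, but it takes a different route from the paper. The paper's own argument is a two-line induction in the spirit of its other lemmas: it checks the identity when $X$ is a single $n$-cube (both sides equal $1/(1+t)^{n}$, via the product structure) and then observes that both sides are additive under the same inclusion--exclusion when stars are glued along stars, so the general case follows by building the star cube by cube; the second statement is then dismissed as an immediate consequence of the first. You instead prove the identity in one direct computation: using the bijection between vertices $y$ spanning a cube with $x$ and simplices $\sigma$ of $L=\mathrm{lk}(x)$ (with $d(x,y)=\dim\sigma+1$ and $\mathrm{lk}(\Box_{xy})\cong \mathrm{lk}_{L}(\sigma)$), you expand $f_{\mathrm{lk}_L(\sigma)}(u)$ over cofaces $\rho\supseteq\sigma$, exchange the order of summation, and collapse the inner sum by the binomial theorem to get $f_{L}(u+v)$ with $u+v=-t/(1+t)$; your handling of the second identity by unfolding the definition of $\bar{c}_{xy}$ matches the paper's (implicit) reasoning. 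Both arguments are sound; the paper's is shorter because it reuses the inclusion--exclusion behavior already established for the coefficients elsewhere in the text, while yours is self-contained, uses only the defining formula for $c_{xy}$ and standard facts about links in CAT(0) cube complexes, and makes the appearance of the substitution $-t/(1+t)$ transparent rather than verifying it only on a cube.
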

\begin{proof}
	The first statement is true for an $n$-cube as both sides evaluate to $\frac{1}{(1+t)^{n}}$, and both sides behave the same under taking unions.
	The second statement follows from the first.
\end{proof}
Summing the main formula $\sum_{y \in X/G} \bar{c}_{xy} G_{yz} = \delta_{xz} $ over $x$, or $z$, or both, and using the previous Lemma gives:
\begin{Corollary}
	\[
		\sum_{y \in X/G} f_{y}\left( \frac{-t}{1+t} \right) G_{xy} = 1,
	\]
	\[
		\sum_{y \in X/G} \bar{c}_{xy}G_{y}=1,
	\]
	\[
		\sum_{x \in X/G} f_{x}\left( \frac{-t}{1+t} \right) G_{x}=\#X/G.
	\]
\end{Corollary}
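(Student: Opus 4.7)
My plan is to derive the three identities of the Corollary from the main formula $\sum_{y \in X/G} \bar c_{xy} G_{yz} = \delta_{xz}$ of Theorem~\ref{t:general} together with Lemma~\ref{c:sum1}, by summing over one or both of the free indices $x, z \in X/G$.

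I would start with the second identity, obtained by summing the main formula over $z$. The orbit decomposition $\mathrm{Vert}(X) = \bigsqcup_{z \in X/G}\pi^{-1}(z)$ shows that $\sum_z G_{yz} = G_y$, since every vertex of $X$ contributes exactly one term $t^{d(\bar y,\,\cdot)}$ as $z$ ranges over orbits. Interchanging the two sums then yields $\sum_y \bar c_{xy} G_y = \sum_z \delta_{xz} = 1$.

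For the first identity, I would use that Theorem~\ref{t:general} realizes $(\bar c_{xy})$ and $(G_{xy})$ as mutually inverse finite square matrices over the field of rational functions in $t$, so the companion identity $\sum_y G_{xy}\bar c_{yz} = \delta_{xz}$ holds as well. Summing over $z$ and substituting Lemma~\ref{c:sum1}'s row-sum evaluation $\sum_z \bar c_{yz} = f_y(-t/(1+t))$ produces $\sum_y f_y(-t/(1+t))\, G_{xy} = 1$.

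For the third identity, I would sum the main formula over both $x$ and $z$. The right-hand side becomes $\#X/G$, and because the inner sum over $y$ separates, the left-hand side can be rewritten as $\sum_y \bigl(\sum_x \bar c_{xy}\bigr)\bigl(\sum_z G_{yz}\bigr) = \sum_y \bigl(\sum_x \bar c_{xy}\bigr) G_y$. Matching this against the stated form then requires the column-sum identity $\sum_x \bar c_{xy} = f_y(-t/(1+t))$, the counterpart of Lemma~\ref{c:sum1} for the other index. I expect this to be the main obstacle: in the symmetric setting of Theorem~\ref{t:simple} the matrix $(\bar c_{xy})$ is symmetric and the identity is immediate, while in general it should follow from the symmetry $c_{\bar x\bar y} = c_{\bar y\bar x}$ combined with an orbit-counting argument over the fibers $\pi^{-1}(x)$ parallel to the proof of Lemma~\ref{c:sum1}.
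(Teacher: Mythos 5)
Your treatment of the first two identities is correct. For the second you sum the inversion formula of Theorem~\ref{t:general} over $z$ and use $\sum_z G_{yz}=G_y$, which is exactly the paper's route. For the first, your detour through the companion identity $\sum_y G_{xy}\bar c_{yz}=\delta_{xz}$ (a one-sided inverse of a finite square matrix over $\rr(t)$ is two-sided) combined with the row-sum statement of Lemma~\ref{c:sum1} is sound, and it is in fact the careful way to read the paper's terse ``sum over $x$'': summing the stated formula literally over its first index produces the column sums $\sum_x\bar c_{xy}$ as weights, and these are \emph{not} what Lemma~\ref{c:sum1} computes.

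The obstacle you flag for the third identity is genuine, and the fix you propose does not work: symmetry of $c_{\bar x\bar y}$ does not yield the column-sum identity $\sum_x\bar c_{xy}=f_y\bigl(\frac{-t}{1+t}\bigr)$, because the fibers $\pi^{-1}(x)$ and $\pi^{-1}(y)$ meet a diagonal $G$-orbit of pairs in sets of different sizes when the vertex stabilizers have different orders. Concretely, let $X$ be the path $\bar y_1-\bar x-\bar y_2$ with $G=\mathbb{Z}/2$ swapping the two leaves, and let $x,y\in X/G$ denote the orbits of $\bar x$ and $\bar y_1$. Then $\bar c_{xy}=\frac{-2t}{1-t^2}$ while $\bar c_{yx}=\frac{-t}{1-t^2}$, so $(\bar c_{xy})$ is not symmetric, and the column sum $\bar c_{xy}+\bar c_{yy}=\frac{1-2t}{1-t^2}$ differs from $f_y\bigl(\frac{-t}{1+t}\bigr)=\frac{1}{1+t}$. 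Worse, in this example the third displayed identity itself fails:
\[
	\sum_{x\in X/G} f_x\!\left(\frac{-t}{1+t}\right) G_x
	=\frac{(1-t)(1+2t)+(1+t+t^2)}{1+t}
	=\frac{2+2t-t^2}{1+t}\neq 2=\#X/G,
\]
even though Theorem~\ref{t:general} and the first two identities hold here (one checks $(\bar c_{xy})(G_{yz})=I$ directly). So no orbit-counting argument can close your gap without an additional hypothesis forcing $(\bar c_{xy})$ to be symmetric --- for instance stars embedding in $X/G$ as in Theorem~\ref{t:simple}, the action being free on vertices, or equal orders of the vertex stabilizers along each cube. The paper's own proof is the same one-line summation and silently relies on this symmetry, so your instinct that the column sums are ``the main obstacle'' is right; the honest conclusion is that the third identity needs such a hypothesis, not that the orbit count will rescue it in general.
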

So we indeed recover the Davis complex formula.

Our proof of Theorems~\ref{t:simple} and \ref{t:general} is based on a different description of the entries of the inverse of the matrix $G_{xy}$.
We develop this description in the next four lemmas before proving the Theorems.
For each vertex $x \in X$ define a function $h_{x}:X \to \rr[t]$ by $h_{x}(y)=t^{d(x,y)}$.
The following lemma is key in our approach.
\begin{Lemma}\label{l:key}
	Let $x \in X$, and let $S=Vert(St(x))$ denote the vertices of the cubical star of $x$.
	Then the characteristic function of $\{x\}$, $1_{x}$ is a unique linear combination of the functions $h_{y}$, $y\in S$, over $\rr(t)$, the field of rational functions.
\end{Lemma}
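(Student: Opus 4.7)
The plan is to recast the desired equation $\sum_{y \in S}\alpha_y h_y = 1_x$, which is \emph{a priori} an infinite linear system over $\mathbb{R}(t)$ (one equation per $z \in Vert(X)$), as a finite linear system on $S$, and then verify invertibility of that finite system by a specialization argument.

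The geometric core of the reduction is to assign to each vertex $z \in Vert(X)$ a canonical vertex $y^*(z) \in S$ enjoying the distance factorization
\[
d(y, z) \;=\; d(y, y^*(z)) + d(y^*(z), z) \qquad \text{for every } y \in S.
\]
To construct $y^*(z)$, let $\mathcal{H}_x$ denote the set of hyperplanes dual to edges at $x$ and $\mathcal{H}(x,z)$ the hyperplanes separating $x$ from $z$. I would take $y^*(z)$ to be the vertex of $St(x)$ separated from $x$ by exactly the hyperplanes in $\mathcal{H}_x \cap \mathcal{H}(x,z)$. For this to be well-defined, the set $\mathcal{H}_x \cap \mathcal{H}(x,z)$ must be pairwise crossing, and verifying this is the main obstacle: if two of its elements $H_1, H_2$ were disjoint, then since both are adjacent to $x$ the vertex $x$ would have to lie in the middle one of the three connected components of $X \setminus (H_1 \cup H_2)$, and consequently $z$ would have to lie simultaneously in the two disjoint ``outer'' components on the far sides of $H_1$ and $H_2$---a contradiction. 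The factorization itself then drops out of the symmetric-difference identity $\mathcal{H}(y,z) = \mathcal{H}(x,y) \triangle \mathcal{H}(x,z)$ combined with the inclusion $\mathcal{H}(x,y) \subseteq \mathcal{H}_x$ available for $y \in S$.

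With the factorization in hand, every equation of the system collapses:
\[
\sum_{y \in S}\alpha_y h_y(z) \;=\; t^{d(y^*(z), z)} \sum_{y \in S}\alpha_y h_y(y^*(z)).
\]
Because $y^*(w) = w$ for $w \in S$ and $y^*(z) = x$ only when $z = x$, the full infinite system is equivalent to its restriction to $S$, namely the finite linear system $M \alpha = e_x$ with $M = (t^{d(y,w)})_{y, w \in S}$. To conclude, I would specialize at $t = 0$: the entries reduce to $\delta_{yw}$, so $M(0) = I$, whence $\det M$ is a rational function with constant term $1$ and is therefore nonzero, making $M$ invertible over $\mathbb{R}(t)$ and yielding existence and uniqueness of the coefficients $\alpha_y$. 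Once the pairwise-crossing claim is secured, everything else is formal.
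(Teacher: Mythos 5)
Your proposal is correct and follows essentially the same route as the paper: your gate vertex $y^*(z)$ is exactly the paper's cone point $r$ (the vertex of $St(x)$ cut off by the hyperplanes at $x$ separating $x$ from $z$), the factorization $d(y,z)=d(y,y^*(z))+d(y^*(z),z)$ for $y\in S$ is the same reduction to the star, and the finish via $M(0)=I$ and $\det M\neq 0$ is identical. The only difference is that you justify the factorization combinatorially (pairwise crossing of $\mathcal{H}_x\cap\mathcal{H}(x,z)$ and the wall-metric symmetric-difference identity) where the paper asserts it geometrically via the cone decomposition and geodesics through $r$.
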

\begin{proof}
	Since $X$ is CAT(0), the hyperplanes near $x$ (corresponding to edges starting at $x$)
	divide $X$ into convex polyhedral regions.
	Each region $R$ has a unique vertex $r$ closest to $x$.
	Also, $r \in S$.
	We will refer to the regions as cones and to the vertices as cone points.
	
	For any $z \in R$ and $y\in S$ there exist a geodesic edge path which goes through $r$.
	\[
		d(y,z)=d(y,r) + d(r,z).
	\]
	Therefore,
	\[
		h_{y}(z)=t^{d(r,z)} h_{y}(r).
	\]
	\begin{center}
		\begin{tikzpicture}[scale=1]
			\def\n{3.2}
			\def\m{-\n+1}
			
			\draw[ultra thin] (\m, \m) grid (\n, \n);
			\draw[dashed] (\m, .5) -- (\n, .5);
			\draw[dashed] (\m, -.5) -- (\n, -.5);
			\draw[dashed] (.5, \m) -- (.5, \n);
			\draw[dashed] (-.5, \m) -- (-.5, \n);
			\coordinate (z) at (2,3) ;
			\coordinate (y) at (-1,1);
			\coordinate (r) at (1,1);
			\coordinate (s) at (-1,-1);
			\draw[thick] (s)-|(r);
			\draw[thick] (s)|-(r);
			\node at (2.5, 2.5) {$R$};			
			\filldraw (0,0) circle (2pt)[fill=black] node[below left] {$x$}; 
			\filldraw (z) circle (2pt)[fill=black] node[below left] {$z$}; 
			\filldraw (y) circle (2pt)[fill=black] node[below left] {$y$}; 
			\filldraw (r) circle (2pt)[fill=black] node[below left] {$r$};
			\draw[ultra thick] (y)--(r);
			\draw[ultra thick] (r)-|(z);
		\end{tikzpicture}
	\end{center}
	
	This implies that for a fixed cone the values of all the functions $h_{y}(z)$ are the same power of $t$ multiple of the corresponding value at the cone point.
	It follows that if a linear combination of $h_{y}$, $y\in S$ vanishes at a cone point, then it vanishes on the whole cone.
	
	Thus, since the cone points are precisely $S$ it is enough to prove the special case, when $X=St(x)$.
	
	In this case the $S \times S$ matrix of values of $h$-functions $(h_{y}(z))=(t^{d(y,z)})$ has $1$'s on the diagonal and positive powers of $t$ off the diagonal.
	Its determinant is a nonzero polynomial, since it evaluates to $1$ at $t=0$.
	Therefore, the matrix is invertible over $\rr(t)$ and the desired coefficients are given by the $x$-row of its inverse.
\end{proof}
\begin{Lemma}
	If $X=A\times B$ then the coefficients for $X$ are products of coefficients for $A$ and $B$.
\end{Lemma}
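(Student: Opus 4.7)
The plan is to exploit the uniqueness statement in Lemma \ref{l:key}. Fix a basepoint $x=(x_A,x_B)\in X=A\times B$. First I would check the two structural facts that make the product argument work: (i) the cubical star factors as $St(x)=St(x_A)\times St(x_B)$, so $Vert(St(x))=Vert(St(x_A))\times Vert(St(x_B))$; and (ii) the $\ell_1$-metric on a cube complex product is the sum of the factor metrics, hence for any vertices $y=(y_A,y_B)$ and $z=(z_A,z_B)$ one has
\[
h^X_y(z)=t^{d_A(y_A,z_A)+d_B(y_B,z_B)}=h^A_{y_A}(z_A)\cdot h^B_{y_B}(z_B).
\]

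Next, let $(a_{y_A})_{y_A\in Vert(St(x_A))}$ and $(b_{y_B})_{y_B\in Vert(St(x_B))}$ be the coefficients furnished by Lemma \ref{l:key} for $(A,x_A)$ and $(B,x_B)$ respectively, so that $\sum_{y_A} a_{y_A} h^A_{y_A}=1_{x_A}$ as functions on $Vert(A)$, and similarly for $B$. I would then evaluate the product expansion at an arbitrary $z=(z_A,z_B)$:
\[
\sum_{(y_A,y_B)} a_{y_A} b_{y_B}\, h^X_{(y_A,y_B)}(z) \;=\; \Bigl(\sum_{y_A} a_{y_A} h^A_{y_A}(z_A)\Bigr)\Bigl(\sum_{y_B} b_{y_B} h^B_{y_B}(z_B)\Bigr) \;=\; 1_{x_A}(z_A)\,1_{x_B}(z_B),
\]
which equals $1_x(z)$. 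Thus the products $a_{y_A}b_{y_B}$ give \emph{some} linear combination of the $h^X_{(y_A,y_B)}$ representing $1_x$, and by the uniqueness part of Lemma \ref{l:key} these must be \emph{the} coefficients for $X$.

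There is no real obstacle here: the only things to verify are the decomposition of $St(x)$ as a product and the additivity of the $\ell_1$-metric, both of which are built into the definition of the product cube complex. The argument is essentially a tensor-product identity for the inverse of the matrix $(h_y(z))$, which splits as a Kronecker product under the factorization $X=A\times B$.
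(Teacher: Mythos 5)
Your proposal is correct and is essentially the same argument as the paper's: the paper's one-line proof invokes exactly the multiplicativity $h_{(a,b)}(x,y)=h_a(x)h_b(y)$, and your expansion of the double sum together with the uniqueness in Lemma~\ref{l:key} is precisely what makes that ``immediate'' step work. You have merely written out the details the paper leaves implicit.
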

\begin{proof}
	This is immediate from the formula
	\[
		h_{(a,b)}(x,y)=h_{a}(x)h_{b}(y).
	\]
\end{proof}
One of the implications of the proof of Lemma~\ref{l:key} is that it is enough to understand the case when $X=St(x)$.
Note that when $X$ is a star, we do not need to assume that $X$ is CAT(0).

So assume that $X=St(x)$ and denote the coefficients posited in the Lemma by $c_{xy}^{X}$.
This should not cause confusion since we will show in Lemma~\ref{l:expl} that they are same as $c^{}_{xy}$ given by (\ref{e:cxy}).
Our proof is based on building $X$ inductively cube by cube and using the product formula and a certain inclusion--exclusion formula (Lemma~\ref{l:ie} below.)

In order to state the inclusion--exclusion formula we introduce more notation.
If $A$ is a sub-complex of $X$ containing $x$, which is also a star $A=St_{A}(x)$, then we extend the coefficients $c_{xy}^{A}$ to all of $X$ by setting $c_{xy}^{A}=0$ for $y \not\in A$.
Since for $y\in A$ the function $h_{y}$ for $X$ restricts to the function for $A$, we have
\[
	\sum_{y \in X} c_{xy}^{A} h_{y}(z) = \sum_{y \in A} c_{xy}^{A} h_{y}(z),
\]
and the resulting function restricts to $1_{x}$ on $A$.

The basis of our induction is the following.
If $X=\{x\}$ then the only coefficient is 1. For a segment $X=[xy]$ the coefficients are $c_{xx}^{X}=1/(1-t^{2})$ and $c_{xy}^{X}=-t/(1-t^{2})$:
\[
	1_{x}=\frac{1}{1-t^{2}} h_{x} + \frac{-t}{1-t^{2}} h_{y}.
\]
\begin{Lemma}[Inclusion--exclusion]\label{l:ie}
	If $X=St(x)$ decomposes as $X=A\cup_{C} B$, where $A$, $B$ and $C$ are subcomplexes of $X$ which are also stars of $x$, then
	\[
		c_{xy}^{X} = c_{xy}^{A} +c_{xy}^{B} -c_{xy}^{C}.
	\]
\end{Lemma}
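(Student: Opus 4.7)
My plan is to apply uniqueness from Lemma~\ref{l:key} to $X$: since $c_{xy}^X$ is the unique collection for which $\sum_y c_{xy}^X h_y = 1_x$ on $\mathrm{Vert}(X)$, it is enough to check that the candidate function $\psi := \phi_A + \phi_B - \phi_C$ agrees with $1_x$ on every vertex of $X$. By the restriction property recalled just before the lemma, each of $\phi_A$, $\phi_B$, $\phi_C$ equals $1_x$ on its own subcomplex, so $\psi = 1_x$ holds on $C = A \cap B$ for free. What remains, by symmetry, reduces to the single identity $\phi_B|_A = \phi_C|_A$: given this, on $A\setminus C$ we get $\psi = 0 + \phi_B - \phi_C = 0 = 1_x$ (using $x \in C$, so $\phi_A$ vanishes on $A\setminus C$), and the analogous claim $\phi_A|_B = \phi_C|_B$ handles $B\setminus C$; together with $A \cup B = X$ this gives $\psi = 1_x$ everywhere.

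The strategy for proving $\phi_B|_A = \phi_C|_A$ is to show that $\phi_B|_A$ already lies in the $\mathbb{R}(t)$-span of $\{h_w^A : w \in C\}$. Once this is known, the identification is immediate: Lemma~\ref{l:key} applied to $C$ makes the matrix $(h_w^C(z))_{w,z\in C}$ invertible, so any combination of $\{h_w^A\}_{w\in C}$ is determined by its restriction to $C$; both $\phi_B|_A$ and $\phi_C|_A = \sum_{w \in C} c_{xw}^C h_w^A$ lie in this span, and both restrict to $1_x|_C$ on $C$ (the first because $\phi_B|_B = 1_x|_B$ and $C \subseteq B$), so their coefficient sets must coincide.

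To place $\phi_B|_A$ in this span I would use the simplex description of the star: vertices $u$ of $X = \mathrm{St}(x)$ correspond to simplices $\sigma_u \in Lk(x)$ with $d(u,u') = |\sigma_u \triangle \sigma_{u'}|$. For any $y \in B$ and $z \in A$, the intersection $\sigma_y \cap \sigma_z$ is a subface of $\sigma_z \in Lk_A$ (so it lies in $Lk_A$) and of $\sigma_y \in Lk_B$ (so in $Lk_B$), hence in $Lk_A \cap Lk_B = Lk_C$; the corresponding vertex $w = w(y,z) \in C$ then satisfies the gate factorization $d(y,z) = d(y,w) + d(w,z)$, so $h_y(z) = h_y(w)\,h_w(z)$. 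I would use this to expand each $h_y|_A$ (for $y \in B$) as $\sum_{w \in C} \lambda_{y,w} h_w^A$, pinning down the $\lambda_{y,w}$ by forcing the identity on $C$ (unique by Lemma~\ref{l:key} applied to $C$) and then checking that it continues to hold on $A\setminus C$. Summing $\sum_{y \in B} c_{xy}^B$ times these expansions collects $\phi_B|_A$ into a combination of $\{h_w^A\}_{w\in C}$, as desired.

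The main obstacle I anticipate is precisely this last verification, that coefficients pinned down by values on $C$ really do extend the identity to $A\setminus C$. When the faces of $\sigma_y$ in $Lk_C$ have a unique maximal element $g(y)$, the check reduces to the clean identity $\sigma_y \cap \sigma_z = g(y) \cap \sigma_z$ for $\sigma_z \in Lk_A$, which follows from the inclusion $\sigma_y \cap \sigma_z \subseteq V(Lk_C)$ and the maximality of $g(y)$. When the poset of $Lk_C$-faces of $\sigma_y$ has several maximal elements, the expansion is forced to be a signed (M\"obius-style) combination over that poset, and one has to track the cancellations by hand; this combinatorial bookkeeping is the real technical heart of the lemma.
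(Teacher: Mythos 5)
Your reduction is sound: writing $\phi_A,\phi_B,\phi_C$ for the three candidate combinations, the uniqueness in Lemma~\ref{l:key} does reduce the lemma to the single identity $\phi_B|_A=\phi_C|_A$ (and its mirror), and your derivation that $\sigma_y\cap\sigma_z$ is a face of both $\sigma_y\in Lk_B$ and $\sigma_z\in Lk_A$, hence a simplex of $Lk_C$, is correct and is the right structural input. The genuine gap is exactly where you flag it: the case in which the faces of $\sigma_y$ lying in $Lk_C$ have several maximal elements is left as ``combinatorial bookkeeping,'' with neither the existence of the expansion $h_y|_A=\sum_{w\in C}\lambda_{y,w}h_w|_A$ nor the extension of the identity from $C$ to $A\setminus C$ actually proved. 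This is not a degenerate corner case but the generic one: already for three squares in a row ($Lk_X$ a path $v_3-v_1-v_2-v_4$), with $B$ the middle square and $A$ the two outer ones, $Lk_C$ consists of the two isolated vertices $v_1,v_2$, and the far corner $y$ of $B$ has two maximal $Lk_C$-faces; so as written the proposal proves the lemma only in the gated situation and is incomplete precisely at its ``technical heart.''

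For what it is worth, the gap is closable within your own framework without any Möbius-style bookkeeping: for $z\in A$ the equation $h_y(z)=\sum\lambda_\tau h_{w_\tau}(z)$, after dividing by $t^{|\sigma_z|}$, depends only on $\rho=\sigma_y\cap\sigma_z$, which ranges over the poset $P$ of faces of $\sigma_y$ in $Lk_C$, and each such $\rho$ is already realized by the vertex $w_\rho\in C$; so an ansatz supported on $\{w_\tau:\tau\in P\}$, solved on those finitely many vertices (the relevant matrix is invertible by the same determinant-at-$t=0$ argument as in Lemma~\ref{l:key}), automatically satisfies the identity at every $z\in A$, and existence of the coefficients is all you need. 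Note also that the paper proves the lemma by an entirely different route: a direct computation in the special case where $A=C\times[xz]$ is the star of an edge (using the product formula), followed by an induction in which the decomposition $A\cup_C B$ is intersected with the edge-star decomposition $X_1\cup_{X_3}X_2$; your approach, once completed, is a legitimate alternative, but in its current form it is a plan with the decisive step missing.
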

\begin{proof}
	
	First consider a special case when $A=C\times[xz]$ is the star of the edge $[xz]$.
	We identify $C$ with $C\times\{x\}$.
	Then, for $a=(c,z) \in A-C$ and $b \in B$ we can choose a geodesic through $c=(c,x)\in C$ and therefore
	\[
		h_{a}(b)=th_{c}(b) \qquad h_{b}(a)=t h_{b}(c).
	\]
	Also, from the product formula we have
	\[
		c_{xc}^{A}= \frac{1}{1-t^{2}} c_{xc}^{C} \qquad c_{xa}^{A}= \frac{-t}{1-t^{2}} c_{xc}^{C}.
	\]
	It follows that
	\begin{multline*}
		\sum_{y \in A} c_{xy}^{A}h_{y}(b) = \sum_{a \in A-C} c_{xy}^{A}h_{a}(b) + \sum_{c \in C} c_{xy}^{A} h_{c}(b) \\
		=\sum_{c \in C} \frac{-t}{1-t^{2}} c_{xc}^{C}t h_{c}(b)+ \sum_{c \in C} \frac{1}{1-t^{2}}c_{xc}^{C} h_{c}(b)= \sum_{c \in C} c_{xc}^{C} h_{c}(b),
	\end{multline*}
	\[
		\sum_{b \in B} c_{xb}^{B} h_{y}(a) = t \sum_{b \in B} c_{xb}^{B}h_{b}(c) = t 1_{x}(c).
	\]
	and
	\[
		\sum_{c \in C} c_{xb}^{C} h_{b}(a) = t \sum_{b \in C} c_{xb}^{C}h_{b}(c) = t 1_{x}(c).
	\]
	
	Therefore,
	\begin{multline*}
		\sum_{y \in X} (c_{xy}^{A} +c_{xy}^{B} -c_{xy}^{C} ) h_{y}(b) = \left[ \sum_{y \in A} c_{xy}^{A} h_{y}(b) - \sum_{ c \in C} c_{xc}^{C} h_{c}(b) \right] + \sum_{y \in B} c_{xy}^{B} h_{y}(b) \\
		=1_{x}(b),
	\end{multline*}
	and
	\begin{multline*}
		\sum_{y \in X} (c_{xy}^{A} +c_{xy}^{B} -c_{xy}^{C} ) h_{y}(a) = \sum_{y \in A} c_{xy}^{A} h_{y}(a) - \left[ \sum_{ c \in C} c_{xc}^{C} h_{c}(a) - \sum_{b \in B} c_{xb}^{B} h_{b}(a) \right] \\
		= 1_{x}(a)=0,
	\end{multline*}
	since the bracketed differences vanish.
	
	Thus $\sum_{y \in X} (c_{xy}^{A} +c_{xy}^{B} -c_{xy}^{C} ) h_{y}=1_{x}$, and the special case follows from uniqueness of the coefficients.
	
	The general case now follows by induction.
	Let $X=A\cup_{C} B$ and let $[xz]$ be an edge in $X$.
	As before, associated to the edge we have decomposition of $X$ into the star of $[xz]$ and the rest, which we write as $X=X_{1} \cup_{X_{3}} X_{2}$, for which the inclusion-exclusion formula holds.
	\begin{equation}\label{eq:x}
		c_{xy}^{X} = c_{xy}^{X_{1}} +c_{xy}^{X_{2}} -c_{xy}^{X_{3}}.
	\end{equation}
	Intersecting this decomposition with the original one gives decompositions $A=A_{1} \cup_{A_{3}} A_{2}$, and similarly of $B$ and $C$.
	It also gives decompositions of $X_{i}=A_{i}\cup_{C_{i}} B_{i}$.
	The left hand sides of these 6 decompositions are proper subsets of $X$ and we can assume by induction that the inclusion--exclusion formula holds for them.
	\[
		\begin{aligned}
			c_{xy}^{X_{1}} &= c_{xy}^{A_{1}} +c_{xy}^{B_{1}} -c_{xy}^{C_{1}} \\
			c_{xy}^{X_{2}} &= c_{xy}^{A_{2}} +c_{xy}^{B_{2}} -c_{xy}^{C_{2}} \\
			c_{xy}^{X_{3}} &= c_{xy}^{A_{3}} +c_{xy}^{B_{3}} -c_{xy}^{C_{3}} \\
		\end{aligned}
		\qquad
		\begin{aligned}
			c_{xy}^{A} &= c_{xy}^{A_{1}} +c_{xy}^{A_{2}} -c_{xy}^{A_{3}} \\
			c_{xy}^{B} &= c_{xy}^{B_{1}} +c_{xy}^{B_{2}} -c_{xy}^{B_{3}} \\
			c_{xy}^{C} &= c_{xy}^{C_{1}} +c_{xy}^{C_{2}} -c_{xy}^{C_{3}}
		\end{aligned}
	\]
	Substituting the formulas in the first column into (\ref{eq:x}) and comparing with $c_{xy}^{A} +c_{xy}^{B} -c_{xy}^{C}$ using the second column verifies the desired formula for $X=A\cup_{C} B$.
\end{proof}

Below are some examples.
\begin{center}
	\begin{tikzpicture}[scale=1.5]
		
		\def\d{1-t^{2}}
		\draw (0,0) node[left] {$x$} node[above] {$\frac{1}{\d} $} -- (1,0) node[above] {$\frac{-t}{\d} $} ;
		\begin{scope}[xshift=3cm]
			\def\d{(1-t^{2})^{2}}
			\draw (0,0) node[left] {$x$} node[below] {$\frac{1}{\d} $} -- (1,0) node[below] {$\frac{-t}{\d} $} -- (1,1) node[above] {$\frac{t^{2}}{\d} $} -- (0,1) node[above] {$\frac{-t}{\d} $} -- cycle ;
		\end{scope}
		\begin{scope}[yshift=-3cm, xshift=2cm]
			\def\d{(1-t^{2})^{2}}
			\draw (0,0) node[above left] {$x$} node[below] {$\frac{2}{\d} - \frac{1}{1-t^{2}}$} -- (1,0) node[ right] {$\frac{-t}{\d} $} -- (1,1) node[ right] {$\frac{t^{2}}{\d} $} -- (0,1) node[above] {$\frac{-2t}{\d} - \frac{-t}{1-t^{2}} $} -- cycle ;
			\draw (0,0) -- (-1,0) node[ left] {$\frac{-t}{\d} $} -- (-1,1) node[ left] {$\frac{t^{2}}{\d} $} -- (0,1) ;
		\end{scope}
	\end{tikzpicture}
\end{center}

In fact, we have explicit formulas for the coefficients:
\begin{Lemma}\label{l:expl}
	The coefficient of $h_{y}$ is precisely the $c_{xy}$ introduced before:
	\[
		c_{xy}= \left(\frac{-t}{1-t^{2}}\right)^{d(x,y)} f_{xy}\left(\frac{t^{2}}{1-t^{2}}\right).
	\]
\end{Lemma}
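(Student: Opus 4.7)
The plan is to induct on the total number of cubes in $X=St(x)$, with a single cube as the base case and the inclusion--exclusion formula of Lemma~\ref{l:ie} driving the inductive step.

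For the base case, $X=Q$ is a single $n$-cube with $x$ a corner. Writing $Q$ as a product of $n$ edges and iterating the product formula, one gets $c_{xy}^{X}=\left(\frac{1}{1-t^{2}}\right)^{n-k}\left(\frac{-t}{1-t^{2}}\right)^{k}$, where $k=d(x,y)$. On the explicit side, $\Box_{xy}$ is a $k$-face of $Q$ whose link in $Q$ is the $(n-k-1)$-simplex, with $f$-polynomial $(1+t)^{n-k}$; substituting $t^{2}/(1-t^{2})$ for $t$ yields $\left(\frac{1}{1-t^{2}}\right)^{n-k}$, so the two formulas agree.

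For the inductive step, assume $X$ has more than one maximal cube. Pick a maximal simplex $\sigma$ in the link $L=Lk(x)$ (corresponding to a maximal cube $Q$ of $X$), and put $L_{1}=\bar{\sigma}$, $L_{2}=L\setminus\{\sigma\}$, and $L_{0}=L_{1}\cap L_{2}=\partial\sigma$. Under the standard correspondence between simplices of $L$ and cubes of $St(x)$ through $x$, these give a decomposition $X=X_{1}\cup_{X_{0}}X_{2}$ into sub-stars of $x$ with $X_{1}=Q$, and since $L$ has simplices outside $\bar{\sigma}$, each of $X_{0},X_{1},X_{2}$ has strictly fewer cubes than $X$. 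Applying the inductive hypothesis to each and invoking Lemma~\ref{l:ie} yields $c_{xy}^{X}=c_{xy}^{X_{1}}+c_{xy}^{X_{2}}-c_{xy}^{X_{0}}$, so it suffices to verify that the explicit formula itself satisfies the same identity.

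The main obstacle is this last verification. Since $d(x,y)$ and $\Box_{xy}$ coincide in $X$ and in any sub-star containing both points, the identity reduces to $f_{Lk_{L}(\sigma_{y})}=f_{Lk_{L_{1}}(\sigma_{y})}+f_{Lk_{L_{2}}(\sigma_{y})}-f_{Lk_{L_{0}}(\sigma_{y})}$ under the bijection between vertices $y\in St(x)$ and simplices $\sigma_{y}\in L$ (with $y=x$ corresponding to the empty simplex). When $\sigma_{y}\in L_{0}$, this is honest inclusion--exclusion for the union $Lk_{L}(\sigma_{y})=Lk_{L_{1}}(\sigma_{y})\cup_{Lk_{L_{0}}(\sigma_{y})}Lk_{L_{2}}(\sigma_{y})$; when $\sigma_{y}\in L_{1}\setminus L_{0}$, any simplex of $L_{2}$ containing $\sigma_{y}$ would lie in $L_{1}\cap L_{2}=L_{0}$, forcing $Lk_{L}(\sigma_{y})=Lk_{L_{1}}(\sigma_{y})$ and making the other two terms of the identity vanish; the case $\sigma_{y}\in L_{2}\setminus L_{0}$ is symmetric. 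Once this case analysis is settled, uniqueness from Lemma~\ref{l:key} forces the explicit formula to equal $c_{xy}^{X}$, closing the induction.
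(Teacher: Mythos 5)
Your proof is correct and is essentially the paper's own argument in expanded form: the paper's one-line proof (``true for a cube by the product formula, and both sides behave the same under taking unions'') is exactly your base case plus your verification that the explicit formula satisfies the inclusion--exclusion identity of Lemma~\ref{l:ie}, which you carry out via links of simplices in $Lk(x)$. One small wording fix in your second case: since $L_{2}$ is closed under passing to faces and $\sigma_{y}\notin L_{2}$, there is in fact \emph{no} simplex of $L_{2}$ containing $\sigma_{y}$ (rather than such simplices lying in $L_{0}$), which immediately gives $Lk_{L}(\sigma_{y})=Lk_{L_{1}}(\sigma_{y})$ and leaves your conclusion unchanged.
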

\begin{proof}
	By the product formula this is true if $X$ is a cube, and both sides behave the same under taking unions.
\end{proof}

We are now in position to finish the proof of the Theorems.
\begin{proof}
	Since $c_{xy}=0$ for $x$ and $y$ not spanning a cube, we have:
	\[
		\sum_{y \in X} c_{xy} h_{y} = \sum_{y \in St(x)} c_{xy} h_{y} = 1_{x}.
	\]
	Since
	\[
		G_{yz}=\sum_{w\in Gz}h_{y}(w),
	\]
	we have
	\[
		\sum_{y \in X} c_{xy} G_{yz} = \sum_{\substack{y \in X \\
		w\in Gz }} c_{xy} h_{y}(w) = \sum_{ w\in Gz } 1_{x}(w)= \delta_{\pi(x)\pi(z)}.
	\]
	
	Since $G_{xy}$ are $G$-invariant in both variables, and $c_{xy}$ is invariant under the diagonal action, we can express this result in terms of $X/G$, to obtain Theorems \ref{t:simple} and \ref{t:general}.
\end{proof}
Finally, we note that $c_{xx}=f_{x} \left( \frac{t^{2}}{1-t^{2}}\right) $.
Taking $t=\sqrt{-1}$ we obtain the following strange corollary:
\begin{Corollary}\label{c:inverse}
	If the stars of vertices are embedded in $X/G$, then
	\[
		tr(c_{xy}(\sqrt{-1}))=\chi(X/G).
	\]
	Thus the Euler characteristics can be computed from the matrix of the growth series $(G_{xy})$ evaluated at $\sqrt{-1}$.
\end{Corollary}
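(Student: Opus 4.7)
The plan is to compute $\operatorname{tr}(c_{xy}(\sqrt{-1}))$ by direct substitution, using the identity $c_{xx}(t) = f_{x}(t^{2}/(1-t^{2}))$ noted just above the corollary. At $t=\sqrt{-1}$ one has $1-t^{2}=2$ and $t^{2}/(1-t^{2})=-1/2$, so no denominators blow up in any entry $c_{xy}$, and in particular $c_{xx}(\sqrt{-1})=f_{x}(-1/2)$. So the first statement reduces to $\sum_{x\in X/G} f_{x}(-1/2)=\chi(X/G)$.

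The main step is then to unfold $f_{x}$ and swap the order of summation. A $(k-1)$-simplex of $\operatorname{Lk}(x)$ corresponds to a $k$-cube of $X$ containing $x$ (with the empty simplex corresponding to $\{x\}$ itself), so the definition $f_{x}(t)=\sum_{\sigma}t^{\dim\sigma+1}$ specializes to $f_{x}(-1/2)=\sum_{\Box\ni x}(-1/2)^{\dim\Box}$, the sum running over cubes of $X/G$ containing $x$. Here the embedded-stars hypothesis is essential: it guarantees both that cubes of $X/G$ at $x$ correspond bijectively to cubes of $X$ at any lift $\bar x$, and that each $n$-cube of $X/G$ has $2^{n}$ genuinely distinct vertices. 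Exchanging the order of summation and using the identity $2^{n}(-1/2)^{n}=(-1)^{n}$ yields
\[
\sum_{x\in X/G} f_{x}(-1/2)=\sum_{\Box\in X/G}2^{\dim\Box}(-1/2)^{\dim\Box}=\sum_{\Box\in X/G}(-1)^{\dim\Box}=\chi(X/G),
\]
which is the first formula.

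The second assertion then follows from Theorem~\ref{t:simple}: the matrices $(c_{xy})$ and $(G_{xy})$ are mutually inverse as matrices of rational functions, and since every $c_{xy}(\sqrt{-1})$ is finite, the numerical matrix $(G_{xy}(\sqrt{-1}))$ is invertible with inverse $(c_{xy}(\sqrt{-1}))$, whose trace has just been computed. I do not foresee any real obstacle in executing this plan; the only point demanding care is the bookkeeping around embedded stars that both legitimizes the description $f_{x}(-1/2)=\sum_{\Box\ni x}(-1/2)^{\dim\Box}$ directly on $X/G$ and gives the correct vertex count $2^{\dim\Box}$ after the double-sum exchange.
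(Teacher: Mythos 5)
Your proposal is correct and follows essentially the same route as the paper, which simply notes $c_{xx}=f_{x}\bigl(t^{2}/(1-t^{2})\bigr)$ and evaluates at $t=\sqrt{-1}$, leaving the identity $\sum_{x\in X/G}f_{x}(-1/2)=\chi(X/G)$ and the role of the embedded-stars hypothesis implicit. Your write-up just makes that cube-counting step and the appeal to Theorem~\ref{t:simple} explicit.
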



\begin{thebibliography}{1}

\bibitem{g05}
{\'S}.~R. Gal.
\newblock Real root conjecture fails for five- and higher-dimensional spheres.
\newblock {\em Discrete Comput. Geom.}, 34(2):269--284, 2005.

\bibitem{nr98}
G.~A. Niblo and L.~D. Reeves.
\newblock The geometry of cube complexes and the complexity of their
  fundamental groups.
\newblock {\em Topology}, 37(3):621--633, 1998.

\bibitem{s07}
R.~Scott.
\newblock Growth series for vertex-regular {CAT}(0) cube complexes.
\newblock {\em Algebr. Geom. Topol.}, 7:285--300, 2007.

\bibitem{s14b}
R.~Scott.
\newblock Eulerian cube complexes and reciprocity.
\newblock {\em Algebr. Geom. Topol.}, 14(6):3533--3552, 2014.

\bibitem{s97b}
R.~P. Stanley.
\newblock {\em Enumerative combinatorics. {V}ol. 1}, volume~49 of {\em
  Cambridge Studies in Advanced Mathematics}.
\newblock Cambridge University Press, Cambridge, 1997.

\bibitem{s68}
R.~Steinberg.
\newblock {\em Endomorphisms of linear algebraic groups}.
\newblock Memoirs of the American Mathematical Society, No. 80. American
  Mathematical Society, Providence, R.I., 1968.

\end{thebibliography}
\end{document}